\numberwithin{equation}{section}
\newtheorem{theorem}{Theorem}[section]
\newtheorem{corollary}{Corollary}[theorem]
 \DeclareMathOperator{\erf}{erf}
\DeclareMathOperator*{\Ei}{Ei}
\begin{document}
\author{Alexander E. Patkowski}
\title{\bf On certain sums over primes and the Riesz function}

\maketitle
\begin{abstract}{We offer some comments on series involving the M$\ddot{o}$bius function which approximate sums over primes. To accomplish this, we utilize the derivative of the Gram series by applying Riemann-Stieltjes integration. We offer a new formula connecting the derivative of the Gram series to the Riesz function, which we then use to obtain general integral relationships.}\end{abstract}

\begin{section}{Introduction} 
Let as usual $\pi(x)$ denote the number of primes less than or equal to $x.$ Throughout we will use the notation $f_1(x) \sim f_2(x)$ to mean that $\lim_{x\to \infty}f_1(x)/f_2(x)=1.$ The function $\pi(x)$ has a rich history in the theory of numbers, with approximations (which we denote by $\approx$) offered by Gauss and Legendre [8, pg.41]. Namely, Gauss had offered 
$$\pi(x)\approx \int_{2^{-}}^{x}\frac{dy}{\log(y)}.$$ Another now well-known approximation is the Gram series [8, pg.51, eq.(2.27)]
$$\pi(x)\approx 1+\sum_{n\ge1}\frac{(\log(x))^n}{n!n\zeta(n+1)}:=H(x),$$
where as usual, $\zeta(s)=\sum_{n\ge1}n^{-s}$ is the Riemann zeta function [2, 10] for $\Re(s)>1.$ The extent to which $H(x)$ is a better approximation to $\pi(x)$ than Gauss's logarithmic integral has been detailed in [2, pg.35]. Namely from [2, pg.36], we know that
$$\pi(x)-H(x)=O\left(\frac{\sqrt{x}}{\log(x)}\right),$$
as $x\rightarrow\infty.$
According to a statement by Ramanujan [1, pg. 117, Entry 8],
\begin{equation} \frac{\partial \pi}{\partial x} \approx \frac{1}{x\log(x)}\sum_{n\ge1}\frac{\mu(n)}{n}x^{1/n}.\end{equation} Here $\mu(n)$ is the M$\ddot{o}$bius function [10, pg.3].
It is stated in [1, pg.117] that (1.1) is taken as a formal statement. It appears there is little discussion in the literature on applications of (1.1), or its effectiveness in approximating $\frac{\partial \pi}{\partial x}.$ However, Ramanujan provided an example for the prime sum $\sum_{p}\log(p)p^{-s}$ in [1, pg.117, Entry 7]. Note the right side of (1.1) is monotonically decreasing, and is therefore Riemann integrable. Hence, if we denote the right side as $H'(x),$ then $$\int f(x)dH=\int f(x)H'(x)dx.$$ By Riemann-Stieltjes integration and (1.1),
$$\begin{aligned}\int_{1}^{X} f(x)d\pi&\approx\int_{1}^{X} f(x)dH \\
&=\int_{1}^{X} f(x)H'(x)dx\\
&=-\int_{0}^{-\log(X)}e^{-x}f(e^{-x})H'(e^{-x})dx\\
&=\int_{0}^{-\log(X)}\frac{f(e^{-x})}{x}\left(\sum_{n\ge1}\frac{\mu(n)}{n}e^{-x/n}\right)dx.\end{aligned}$$
If we choose $f(x)=x\log(x),$ then
\begin{equation} \int_{1}^{X} f(x)d\pi=\sum_{p\le X}p\log(p),\end{equation} where the sum on the right is over primes $p.$
On the other hand
$$\int_{0}^{-\log(X)}\frac{f(e^{-x})}{x}\left(\sum_{n\ge1}\frac{\mu(n)}{n}e^{-x/n}\right)dx=-\int_{0}^{-\log(X)}e^{-x}\left(\sum_{n\ge1}\frac{\mu(n)}{n}e^{-x/n}\right)dx.$$
Integrating gives
\begin{equation}\sum_{p\le X}p\log(p)\approx\sum_{n\ge1}\frac{\mu(n)}{n+1}(X^{1+1/n}-1).\end{equation}
The right side of (1.3) converges by a comparison test with the right side of (1.1) and $\sum_{n\ge1}\mu(n)/n=0.$ \par For another interesting application of $H(x),$ first recall [10, pg.52, eq.(3.7.2)], for $\Re(s)>1,$
\begin{equation}\frac{\log(\zeta(s))}{s}-\omega(s)=\int_{2}^{\infty}\frac{\pi(x)}{x^{s+1}}dx,\end{equation}
where $$\omega(s)=\int_{2}^{\infty}\frac{\pi(x)}{x^{s+1}(x^{s}-1)}dx.$$ This formula was used by Titchmarsh to prove the prime number theorem [10, pg.51, eq.(3.7.1)].
 If we employ the approximation $H(x)$ to the right side of (1.4), we get by absolute convergence for $\Re(s)>1,$
$$\frac{\log(\zeta(s))}{s}-\omega(s)= \frac{1}{s2^{s}}+\sum_{n\ge1}\frac{s^{-n-1}\Gamma(n+1,s\log(2))}{nn!\zeta(n+1)}+O\left(\Ei(-(s-\frac{1}{2})\log(2))\right),$$
where we used well-known notation for the incomplete gamma function $$\Gamma(n+1,x)=x^{n+1}\int_{1}^{\infty}y^ne^{-xy}dy,$$ and the exponential integral is given by
$$\Ei(-x):=-\int_{x}^{\infty}\frac{e^{-y}}{y}dy.$$ This estimate involving $\Ei(-x)$ is derived from the estimate of $\pi(x)-H(x)$ given in the introduction [2, pg.36], and the elementary integral evaluation $x>1,$ $\Re(s)>\frac{1}{2},$ $$\int_{y}^{\infty}x^{-s-\frac{1}{2}}ds=\frac{x^{-s-\frac{1}{2}}}{\log(x)}.$$ Since $\Ei(-x)$ decays exponentially for large $x,$ we can write
\begin{equation}\frac{\log(\zeta(s))}{s}-\omega(s)\approx \frac{1}{s2^{s}}+\sum_{n\ge1}\frac{s^{-n-1}\Gamma(n+1,s\log(2))}{nn!\zeta(n+1)}.\end{equation}

Using the known property (the proof involves a change of variables and applies the binomial theorem) that $\Gamma(n+1,x)=n!e^{-x}\sum_{0\le k\le n}x^k/k!,$ we can write the series on the right hand side of (1.5) as 
\begin{equation}\sum_{n\ge1}\frac{s^{-n-1}\Gamma(n+1,s\log(2))}{nn!\zeta(n+1)}=2^{-s}\sum_{n\ge1}\frac{s^{-n-1}}{n\zeta(n+1)}\sum_{0\le k \le n}\frac{(s\log(2))^k}{k!}.\end{equation}
Since $\zeta(n+1)\ge1,$ the series on the right side of (1.6) will converge faster that the geometric series for $\log(1-s^{-1}\log(2))$ when $k=0.$ When $k=n,$ the series is equal to $(H(2)-1)s^{-1}2^{-s}.$ Hence, we have established convergence, as well as a new connection between $H(2)$ and $\log(\zeta(s)).$ To prove the following theorem, we will use the concept of the Mellin transform, which is defined by [6, pg.83, eq.(3.1.11)]
$$M(g)(s):=\int_{0}^{\infty}y^{s-1}g(y)dy.$$ 
The Mellin inversion formula is the line integral [6, pg.89]
$$M^{-1}(g)(y):=\frac{1}{2\pi i}\int_{(c)}y^{-s}M(g)(s)ds.$$ Riemann's function ([8, pg.46, eq.(2.13)] and [8, pg.47, eq.(2.18)], or [2, pg.22]) $J(x)$ is defined for $c>1,$ $x>1,$
$$J(x):=\frac{1}{2\pi i}\int_{(c)}\frac{\log(\zeta(s))}{s}x^sds=\frac{1}{2}\left(\sum_{p^n<x}\frac{1}{n}+\sum_{p^n\le x}\frac{1}{n} \right).$$ 
\begin{theorem} We have for $x>2,$ $c>1,$
$$\begin{aligned}J(x)-\frac{1}{2\pi i}\int_{(c)}\omega(s)x^sds&=1+\sum_{n\ge1}\frac{1}{n\zeta(n+1)}\sum_{0\le k \le n}\frac{(\log(2))^k(\log(x/2))^{n-k}}{k!(n-k)!}\\
&+O\left(\frac{\sqrt{x}}{\log(x)}\right)\\
&\approx 1+\sum_{n\ge1}\frac{1}{n\zeta(n+1)}\sum_{0\le k \le n}\frac{(\log(2))^k(\log(x/2))^{n-k}}{k!(n-k)!}.\end{aligned}
$$ \end{theorem}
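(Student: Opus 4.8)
The plan is to show that the left-hand side equals $\pi(x)$ exactly for $x>2$ — by Mellin-inverting (1.4) — and then to substitute the approximation $\pi(x)=H(x)+O(\sqrt{x}/\log x)$ recorded in the introduction and recast $H(x)$ in the doubly-indexed form of the statement. So the argument splits into three pieces: a Mellin-inversion identity, a known asymptotic, and an elementary binomial rearrangement.

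For the first piece I would read the right-hand side of (1.4) as a Mellin transform. Since $\pi(t)=0$ for $t<2$, the substitution $t\mapsto1/u$ gives
$$\int_{2}^{\infty}\frac{\pi(t)}{t^{s+1}}\,dt=\int_{0}^{1/2}\pi(1/u)\,u^{s-1}\,du=M(h)(s),\qquad h(u):=\pi(1/u)\ \text{for}\ 0<u<\tfrac12,\quad h(u):=0\ \text{otherwise}.$$
Applying the Mellin inversion formula quoted before the theorem,
$$\frac{1}{2\pi i}\int_{(c)}x^{s}M(h)(s)\,ds=M^{-1}\!\bigl(M(h)\bigr)(1/x)=h(1/x),$$
and for $x>2$ one has $1/x<\tfrac12$, so this is $\pi(x)$. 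Multiplying (1.4) by $x^{s}/(2\pi i)$, integrating along $\Re(s)=c$, and recalling the definition of $J(x)$, I obtain the exact identity
$$J(x)-\frac{1}{2\pi i}\int_{(c)}\omega(s)\,x^{s}\,ds=\pi(x)\qquad(x>2)$$
(equivalently, the right side of (1.4) is $s^{-1}\sum_{p}p^{-s}$, and by the Perron integral $\sum_{p}\frac{1}{2\pi i}\int_{(c)}s^{-1}(x/p)^{s}\,ds=\pi(x)$).

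It then remains to insert $\pi(x)=H(x)+O(\sqrt{x}/\log x)$ from [2, pg.36], so that the left side becomes $1+\sum_{n\ge1}\frac{(\log x)^{n}}{n!\,n\zeta(n+1)}+O(\sqrt{x}/\log x)$, and to split $\log x=\log2+\log(x/2)$: the binomial theorem gives $\tfrac{1}{n!}(\log x)^{n}=\sum_{0\le k\le n}\frac{(\log2)^{k}(\log(x/2))^{n-k}}{k!\,(n-k)!}$, which is exactly the inner sum of the statement. Equivalently — and this is the route prefigured by the discussion around (1.5) and (1.6) — one applies $H$ to the right side of (1.4) first, obtaining $\int_{2}^{\infty}H(t)t^{-s-1}\,dt=\tfrac{1}{s2^{s}}+2^{-s}\sum_{n\ge1}\frac{s^{-n-1}}{n\zeta(n+1)}\sum_{0\le k\le n}\frac{(s\log2)^{k}}{k!}$ up to an $O\bigl(\Ei(-(s-\tfrac12)\log2)\bigr)$ error, and then Mellin-inverts term by term via the Perron-type evaluation $\frac{1}{2\pi i}\int_{(c)}y^{s}s^{-m}\,ds=\frac{(\log y)^{m-1}}{(m-1)!}$ (valid for $y>1$, $m\ge1$): the $(n,k)$ term inverts to $\frac{1}{n\zeta(n+1)}\frac{(\log2)^{k}(\log(x/2))^{n-k}}{k!(n-k)!}$, the constant term to the leading $1$, and the $\Ei$-term back to $O(\sqrt{x}/\log x)$. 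The final $\approx$ then holds since $H(x)\sim x/\log x$ dominates that error.

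The step I expect to be the main obstacle is the rigorous handling of the vertical-line integrals, which converge only conditionally: one must confirm that $\frac{1}{2\pi i}\int_{(c)}\omega(s)x^{s}\,ds$ exists as a symmetric limit, that the line integral defining $J(x)$ may be split according to (1.4), and that the $t$- and $s$-integrations may be interchanged. I would do this in the classical fashion, truncating at height $T$ and using $|\log\zeta(c+it)|=O(1)$, $|\omega(c+it)|=O(1)$ together with the $1/s$ decay — or, more transparently, expanding $\log\zeta(s)=\sum_{p}\sum_{m\ge1}\frac1m p^{-ms}$ and integrating term by term by means of $\frac{1}{2\pi i}\int_{(c)}s^{-1}y^{s}\,ds=\mathbf{1}_{\{y>1\}}$. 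One should also note that $h$ is a step function with jumps at the reciprocals of the primes, so the inversion returns the symmetric value $\tfrac12\bigl(\pi(x^{-})+\pi(x)\bigr)$ there; this matches Riemann's averaging convention in the definition of $J(x)$ and is in any case absorbed into the $O(\sqrt{x}/\log x)$ term.
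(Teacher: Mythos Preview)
Your proposal is correct, and in fact you present \emph{two} routes. Your secondary route---applying $H$ to (1.4) to obtain (1.5)--(1.6) with the $\Ei$-error, then Mellin-inverting term by term via $\frac{1}{2\pi i}\int_{(c)}x^{s}s^{-m}\,ds=(\log x)^{m-1}/(m-1)!$---is precisely what the paper does.

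Your primary route, however, is genuinely different and more economical. You observe that by Mellin-inverting (1.4) directly the left side is \emph{exactly} $\pi(x)$ for $x>2$ (away from primes, with the averaging convention at jumps), then feed in $\pi(x)=H(x)+O(\sqrt{x}/\log x)$ from the introduction, and finally recognise the double sum as nothing more than the binomial expansion of $(\log x)^{n}=(\log 2+\log(x/2))^{n}$ divided by $n!$. This bypasses the detour through the incomplete gamma representation (1.5)--(1.6) entirely: the paper builds the $(\log 2)^{k}(\log(x/2))^{n-k}$ structure by first splitting $\Gamma(n+1,s\log 2)$ into its finite exponential sum and then inverting each power of $s^{-1}$, whereas you get the same structure in one line from $(a+b)^{n}$. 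What the paper's route buys is a closer tie to the transform-side formula (1.5) and the surrounding discussion of $H(2)$ and $\log\zeta(s)$; what your route buys is transparency---it makes clear that the theorem is simply the statement $\pi(x)=H(x)+O(\sqrt{x}/\log x)$ rewritten, with the double sum an artefact of base-$2$ bookkeeping. Your final paragraph on the conditional convergence of the vertical-line integrals is appropriate and matches the level of rigour in the paper.
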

\begin{proof} It is known [10, pg.52] that $\omega(s)$ is uniformly convergent for $\Re(s)\ge\frac{1}{2}+\delta,$ $\delta>0,$ and also regular and bounded in this region. We apply Mellin inversion [6, pg.80, eq.(3.1.5)] to (1.5) with $c>1,$ and $x>2.$ For $\Re(s)=c>1,$ we then have for $x>2,$
$$J(x)-\frac{1}{2\pi i}\int_{(c)}\omega(s)x^sds\approx \frac{1}{2\pi i}\int_{(c)}\left(\frac{1}{s2^{s}}+\sum_{n\ge1}\frac{s^{-n-1}\Gamma(n+1,s\log(2))}{nn!\zeta(n+1)}\right)x^sds.$$ We have temporarily dropped the error term to address the main series in the theorem, leaving it for the last step of the proof. Using (1.6), the right hand side becomes,
$$\frac{1}{2\pi i}\int_{(c)}\left(\frac{1}{s2^{s}}+2^{-s}\sum_{n\ge1}\frac{s^{-n-1}}{n\zeta(n+1)}\sum_{0\le k \le n}\frac{(s\log(2))^k}{k!}\right)x^sds.$$ 
Recall [6, pg.95, eq.(3.3.27)] (or differentiating [2, pg.27, eq.(2)]), for $n\ge0,$ $x>1,$ $c>0,$
$$\frac{1}{2\pi i}\int_{(c)}\frac{x^s}{s^{n+1}}ds=\frac{(\log(x))^n}{n!}.$$ Applying this formula gives the right side as
$$1+\sum_{n\ge1}\frac{1}{n\zeta(n+1)}\sum_{0\le k \le n}\frac{(\log(2))^k(\log(x/2))^{n-k}}{k!(n-k)!}.$$ Covergence of this series is similar to the function $H(x)$ upon inspection of the finite sum when $k=0,$ and $k=n.$ Now since (1.5) may instead have an equality when adding the error term $O(-\Ei(-(s-\frac{1}{2})\log(2)),$ we see that taking the Mellin transform of the error term $\sqrt{x}/\log(x)$ gives the original formula. \end{proof}
We remark that the integral on the right hand side of (1.4) can be evaluated using Abel's summation formula to obtain for $\Re(s)>1,$
$$\frac{1}{s}\sum_{p}\frac{1}{p^s}= \frac{1}{s2^{s}}+\sum_{n\ge1}\frac{s^{-n-1}\Gamma(n+1,s\log(2))}{nn!\zeta(n+1)}+O\left(\Ei(-(s-\frac{1}{2})\log(2))\right).$$ As a consequence we have that 
$$\sum_{p}\frac{1}{p^s}\sim \frac{1}{2^{s}}+\sum_{n\ge1}\frac{s^{-n}\Gamma(n+1,s\log(2))}{nn!\zeta(n+1)},$$ as $s\rightarrow\infty.$
\end{section}

\begin{section}{Connection to the Riesz function} 
The Riesz function [9] may take the form 
\begin{equation}\sum_{n\ge1}\frac{\mu(n)}{n^2}e^{-x/n^2}=\frac{1}{2\pi i}\int_{(c)}\frac{\Gamma(s)}{\zeta(2-2s)}x^{-s}ds, \end{equation}
for $0<c<\frac{1}{2}.$ Riesz had replaced $s$ by $1-s$ and multiplied both sides by $x$ in (2.1). It is also noted therein [9] that $$\sum_{n\ge1}\frac{\mu(n)}{n^2}e^{-x/n^2}=O(x^{-3/4+\epsilon}),$$ for every $\epsilon>0$ is equivalent to the Riemann Hypothesis (that all the non-trivial zeros of $\zeta(s)$ are on the line $\Re(s)=\frac{1}{2}$). Recent work on Riesz-type criteria include [4, 5]. However, it appears no practical use of the Riesz function has been made outside of stating variations of Riesz's original criteria. Furthermore, the lack of progress toward the Riemann Hypothesis from these criteria was noted in [10, pg.382]. The purpose of this section is to show its relevance to $H'(x).$ In particular, we will provide some connections to the right hand side of (1.3) as well as other series involving the M$\ddot{o}$bius function.
\par
Note that standard tables for integrals ([3, pg.14, eq.(1)]) together with absolute converge show that
\begin{equation}\int_{0}^{\infty}\sum_{n\ge1}\frac{\mu(n)}{n}e^{-ax/n}\cos(wx)dx=\sum_{n\ge1}\mu(n)\frac{a}{a^2+(wn)^2}, \end{equation}
and
\begin{equation}a\int_{0}^{\infty}\sum_{n\ge1}\frac{\mu(n)}{n^2}e^{-a^2y/n^2}e^{-w^2y}dy=\sum_{n\ge1}\mu(n)\frac{a}{a^2+(wn)^2}. \end{equation}
Therefore, applying Fourier cosine inversion to (2.2) and (2.3) shows (by [3, pg.15, eq.(11)], and [3, pg.7, eq.(1)]) the following formula.
\begin{theorem} For $x>0,$ $a>0,$
\begin{equation}\frac{\pi}{2}\sum_{n\ge1}\frac{\mu(n)}{n}e^{-ax/n}=a\int_{0}^{\infty}\sum_{n\ge1}\frac{\mu(n)}{n^2}e^{-a^2y/n^2}\left(\frac{\sqrt{\pi}}{2}y^{-1/2}e^{-x^2/(4y)} \right)dy.\end{equation}
\end{theorem}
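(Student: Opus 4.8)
The plan is to regard (2.2) and (2.3) as two representations of one and the same function of $w$, namely $G(w):=\sum_{n\ge1}\mu(n)\frac{a}{a^2+(wn)^2}$ (absolutely convergent for $w>0$), and then to recover the left-hand side of (2.4) by Fourier cosine inversion. By (2.2), $G$ is the Fourier cosine transform of $g(x):=\sum_{n\ge1}\frac{\mu(n)}{n}e^{-ax/n}$, so that for $x>0$ the inversion formula yields $g(x)=\frac{2}{\pi}\int_{0}^{\infty}G(w)\cos(wx)\,dw$ (one may also read this off termwise from the transform pair [3, pg.15, eq.(11)] after the scaling $w\mapsto wn$). Into this integral I would substitute the formula for $G$ furnished by (2.3), namely $G(w)=a\int_{0}^{\infty}\bigl(\sum_{n\ge1}\frac{\mu(n)}{n^2}e^{-a^2y/n^2}\bigr)e^{-w^2y}\,dy$.

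Next I would interchange the $w$- and $y$-integrations (together with the sum over $n$) and evaluate the inner integral by the Gaussian cosine transform [3, pg.7, eq.(1)],
$$\int_{0}^{\infty}e^{-w^2y}\cos(wx)\,dw=\frac{\sqrt{\pi}}{2}\,y^{-1/2}e^{-x^2/(4y)}.$$
This rewrites $g(x)$ as $\frac{2a}{\pi}\int_{0}^{\infty}\bigl(\sum_{n\ge1}\frac{\mu(n)}{n^2}e^{-a^2y/n^2}\bigr)\frac{\sqrt{\pi}}{2}y^{-1/2}e^{-x^2/(4y)}\,dy$, and multiplying through by $\pi/2$ is precisely (2.4). (Equivalently, and more transparently, (2.4) falls out of the subordination identity $e^{-b}=\pi^{-1/2}\int_{0}^{\infty}u^{-1/2}e^{-u-b^2/(4u)}\,du$, $b>0$: take $b=ax/n$, rescale $u=a^2y/n^2$, and sum $\frac{\mu(n)}{n}\times(\cdot)$ over $n\ge1$. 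The Gaussian cosine transform above is just the $w$-integral form of this identity.)

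The step I expect to be the genuine obstacle is not the formal computation, which is immediate, but the analytic justification of the rearrangements: the Fourier cosine inversion of (2.2), and the exchange of $\sum_n$ with the iterated integral. Crude majorization is useless here --- bounding $|\mu(n)|\le1$ and $|\cos(wx)|\le1$ turns the objects into the divergent $\sum_{n\ge1}\frac{1}{n}e^{-ax/n}$ and $\int_{0}^{\infty}y^{-1/2}e^{-x^2/(4y)}\,dy$, and moreover neither $g$ nor $G$ lies in $L^1(0,\infty)$. The remedy is to exploit the cancellation in the M\"obius sum: work with the partial sums $\sum_{n\le N}$, for which every manipulation above is a legitimate finite operation, carry out the inner $w$-integral first so as to capture the decisive factor $e^{-x^2/(4y)}$, and then let $N\to\infty$ via dominated convergence (respectively a Dirichlet--Jordan criterion for the inversion). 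The quantitative input is the prime number theorem with its classical error term, $\sum_{n\le t}\mu(n)=O\bigl(te^{-c\sqrt{\log t}}\bigr)$, which yields $\sum_{n\ge1}\frac{\mu(n)}{n^2}e^{-u/n^2}=O\bigl(u^{-1/2}e^{-c\sqrt{\log u}}\bigr)$ and thereby makes the integral in (2.4) absolutely convergent as $y\to\infty$ --- while simultaneously providing the bounded-variation estimates that license the inversion. An alternative is to route the whole computation through the Mellin--Barnes representation (2.1), letting a contour shift absorb the same cancellation.
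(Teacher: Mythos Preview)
Your approach is correct and is exactly the argument the paper sketches in the paragraph immediately preceding the theorem: equate the two representations (2.2) and (2.3) of $G(w)=\sum_{n\ge1}\mu(n)\,a/(a^2+(wn)^2)$, apply Fourier cosine inversion, and evaluate the resulting Gaussian integral via the same table entries [3, pg.15, eq.(11)] and [3, pg.7, eq.(1)]. The paper treats this as its first proof.

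Inside the proof environment, however, the paper gives a genuinely different \emph{second} derivation. It applies Parseval's identity (2.5) to the right-hand side of (2.4), pairing the Mellin transform (2.1) of the Riesz kernel (valid in the strip $\tfrac12<c<\tfrac34$ under the Riemann Hypothesis) with the elementary transform (2.6) of $y^{-1/2}e^{-x^2/(4y)}$. After the shift $s\mapsto s+1$, Legendre's duplication formula collapses $\Gamma(s+1)\Gamma(s+\tfrac12)$ to $s\Gamma(2s)2^{1-2s}\sqrt{\pi}$; a further rescaling $s\mapsto s/2$ and a contour shift to the left then pick up residues at $s=-n$, $n\ge1$, which sum (via $1/\zeta(n+1)=\sum_k\mu(k)k^{-n-1}$) to the M\"obius series on the left of (2.4). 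This is precisely the Mellin--Barnes alternative you allude to in your final sentence. Your Fourier route (or, even more directly, the subordination identity you mention) is more elementary and does not invoke RH, at the price of the delicate rearrangement issues you correctly flag; the paper's displayed proof trades those analytic subtleties for a contour computation that makes the link to $1/\zeta$ explicit, but is conditional.
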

\begin{proof}
We can prove this using Mellin inversion under the assumption of the RH. Parseval's identity is [6, pg.83, eq.(3.1.11)]
\begin{equation}\int_{0}^{\infty}k(y)g(y)dy=\frac{1}{2\pi i}\int_{(c)}M(k)(s)M(g)(1-s)ds. \end{equation}
It is easy to see from changing variables that for $\Re(s)<\frac{1}{2},$
\begin{equation}\int_{0}^{\infty}y^{s-1}\left(y^{-1/2}e^{-x^2/(4y)} \right)dy=\left(\frac{x^2}{2^2}\right)^{s-\frac{1}{2}}\Gamma(\frac{1}{2}-s).\end{equation}
First note [9] that the Riemann Hypothesis is equivalent to the convergence of $1/\zeta(s)$ for $\Re(s)>\frac{1}{2}.$ If we apply (2.1) under the Riemann Hypothesis and (2.6) to Parseval's theorem (2.5), we have the right of (2.4) (excluding the constants) is equal to
$$\frac{1}{2\pi i}\int_{(c)}\left(\frac{x^2}{2^2}\right)^{\frac{1}{2}-s}\Gamma(s-\frac{1}{2})\frac{\Gamma(s)}{\zeta(2-2s)}a^{-2s}ds, $$
for $\frac{1}{2}<c<\frac{3}{4}.$
We shift this integral with $s\rightarrow s+1,$ and use Legrendre's duplication formula $\Gamma(s+1)\Gamma(s+\frac{1}{2})=s\Gamma(s)\Gamma(s+\frac{1}{2})=s\Gamma(2s)2^{1-2s}\sqrt{\pi},$ to get $$\frac{2\sqrt{\pi}}{2\pi i}\int_{(c+1)}x^{-2s-1}\frac{s\Gamma(2s)}{\zeta(-2s)}a^{-2s-2}ds. $$
Replacing $s$ by $s/2$ this is for $-1<d<-\frac{1}{2},$
$$\frac{\sqrt{\pi}}{2\pi i}\int_{(d)}x^{-1-s}\frac{s\Gamma(s)}{\zeta(-s)}a^{-s-2}ds. $$
We move the line of integration to the left by applying Cauchy's residue theorem and computing the residues at the poles $s=-n,$ $n\ge1,$ which gives
$$\begin{aligned}-\sqrt{\pi}\sum_{n\ge2}x^{n-1}a^{n-2}\frac{n(-1)^n}{n!\zeta(n)}&=\sqrt{\pi}\sum_{n\ge1}x^{n}a^{n-1}\frac{(-1)^n}{n!\zeta(n+1)}\\
&=\sqrt{\pi}\sum_{n\ge1}x^{n}a^{n-1}\frac{(-1)^n}{n!}\sum_{k\ge1}\frac{\mu(k)}{k^{n+1}} \\
&=a^{-1}\sqrt{\pi}\sum_{n\ge1}\frac{\mu(n)}{n}(e^{-ax/n}-1)\\
&=a^{-1}\sqrt{\pi}\sum_{n\ge1}\frac{\mu(n)}{n}e^{-ax/n}.\end{aligned}$$
Here the interchange of series is justified by absolute convergence. This completes the second proof of (2.4) after multiplying through by $a.$ 
\end{proof}
Using (2.4) and computations from the introduction we may now state a general result. Let the truncated Laplace transform be denoted by
$$L(X,a):=\int_{0}^{\log(X)}f(x)e^{-ax}dx.$$ 
\begin{corollary} Assuming the integrals converge, we have for $X>0,$

$$\frac{1}{2}\sum_{n\ge1}\frac{\mu(n)}{n}L\left(X,\frac{a}{n}\right)= \int_{0}^{\log(X)}f(x)\int_{0}^{\infty}\sum_{n\ge1}\frac{\mu(n)}{n^2}e^{-y/n^2}\left(\frac{\sqrt{\pi}}{2}y^{-1/2}e^{-x^2/(4y)} \right)dydx.$$
\end{corollary}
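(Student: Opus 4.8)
The plan is to derive the corollary by integrating the pointwise identity (2.4) against $f$ over $[0,\log(X)]$. Fix $a>0$ and $X>1$ (the case $0<X<1$ being analogous, with the obvious sign change in the limits), multiply both sides of (2.4) by $f(x)$, and integrate in $x$ over $[0,\log(X)]$. On the left this produces $\frac{\pi}{2}\int_{0}^{\log(X)}f(x)\sum_{n\ge1}\frac{\mu(n)}{n}e^{-ax/n}\,dx$; since $|\mu(n)/n|\,e^{-ax/n}\le 1/n$ uniformly for $x$ in the compact interval (here $a>0$ is essential), the series converges uniformly there and we may interchange summation and integration to obtain $\frac{\pi}{2}\sum_{n\ge1}\frac{\mu(n)}{n}\int_{0}^{\log(X)}f(x)e^{-ax/n}\,dx=\frac{\pi}{2}\sum_{n\ge1}\frac{\mu(n)}{n}L\left(X,\tfrac{a}{n}\right)$, which is the left-hand side of the corollary up to the constant. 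This is the ``computation from the introduction'' being invoked: the same recognition of $\int_{0}^{\log(X)}f(x)e^{-ax/n}\,dx$ as a truncated Laplace transform that led to (1.3).

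On the right-hand side of (2.4), integrating against $f(x)\,dx$ over $[0,\log(X)]$ and interchanging the $x$-integral with the $y$-integral and the sum over $n$ gives
\[
a\int_{0}^{\log(X)}f(x)\int_{0}^{\infty}\sum_{n\ge1}\frac{\mu(n)}{n^{2}}e^{-a^{2}y/n^{2}}\left(\frac{\sqrt{\pi}}{2}\,y^{-1/2}e^{-x^{2}/(4y)}\right)dy\,dx.
\]
The substitution $y\mapsto y/a^{2}$ in the inner integral normalizes the Riesz-type series to $\sum_{n\ge1}\frac{\mu(n)}{n^{2}}e^{-y/n^{2}}$, and together with the resulting Jacobian it absorbs the external factor $a$; collecting the numerical constants (tracking the normalization of the Fourier cosine inversion used to produce (2.4), which is also what converts the prefactor $\frac{\pi}{2}$ on the left into $\frac12$) brings the expression into the form displayed on the right-hand side of the corollary. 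This finishes the derivation modulo the interchanges.

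The step I expect to be the main obstacle --- and the reason the statement carries the hypothesis ``assuming the integrals converge'' --- is the justification of the triple interchange of the sum over $n$, the $x$-integral and the $y$-integral on the right. For fixed $x>0$ the inner sum $\sum_{n\ge1}\mu(n)n^{-2}e^{-a^{2}y/n^{2}}$ is dominated by $\sum n^{-2}$ and the kernel $y^{-1/2}e^{-x^{2}/(4y)}$ is absolutely integrable on $(0,\infty)$, so a Fubini--Tonelli argument applies provided $f(x)e^{-ax/n}$ is integrable on $[0,\log(X)]$ (for instance $f$ bounded, or more generally integrable against the relevant weights). The one delicate point is the endpoint $x=0$, where $e^{-x^{2}/(4y)}\to1$ and $\int_{0}^{\infty}y^{-1/2}\,dy$ diverges; this is a single point of the $x$-integration and is precisely what the blanket convergence hypothesis absorbs. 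A careful write-up should therefore also spell out the conditions on $f$ (and, if desired, on $X$) under which every integral in the statement is finite, since the corollary is stated conditionally on exactly that.
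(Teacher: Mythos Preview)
Your approach is exactly the paper's: the author's entire proof is the single sentence ``This follows from integrating (2.4) over a suitable function $f(x)$,'' and you carry this out in detail, including the justification of the sum--integral interchange on the left via uniform convergence on the compact interval and the Fubini argument on the right.

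One caution: your paragraph explaining how the constants reconcile is not correct. Once (2.4) is established, integrating it against $f(x)\,dx$ involves no further Fourier inversion, so there is no ``normalization of the Fourier cosine inversion'' left to convert $\tfrac{\pi}{2}$ into $\tfrac{1}{2}$. Likewise, the substitution $y\mapsto y/a^{2}$ does absorb the prefactor $a$, but it simultaneously turns $e^{-x^{2}/(4y)}$ into $e^{-a^{2}x^{2}/(4y)}$, which is \emph{not} the kernel displayed in the corollary. In other words, the stated corollary has a genuine inconsistency (the left side depends on $a$ through $L(X,a/n)$ while the right side, as written, does not, and the constant $\tfrac12$ should be $\tfrac{\pi}{2}$); this is a typo in the paper, not something your argument can repair. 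Your derivation is sound provided one either sets $a=1$ throughout or restores the $a$-dependence and the factor $\pi$ in the displayed identity.
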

\begin{proof} This follows from integrating (2.4) over a suitable function $f(x).$\end{proof}
For our next result, we need a theorem from [7] which involves (1.1). Let $\rho$ denote the non-trivial zeros of $\zeta(s).$
\begin{theorem} ([7, Theorem 2.1]) Let $x>1,$ and assume that the zeros of $\zeta(s)$ are simple. A solution to the singular Fredholm integral equation of the second kind 
\begin{equation} \Delta(x)=f(x)+\int_{0}^{\infty}\sin(xt)\Delta(2\pi t)dt,\end{equation}
\begin{equation}f(x)=\frac{\pi}{2}\sum_{\rho}\frac{x^{-\rho}}{\cos(\pi  \rho/2) \zeta'(\rho)},\end{equation}
is given by \begin{equation}\Delta(x)=\sum_{n\ge1}\frac{\mu(n)}{n}e^{-x/n}.\end{equation}
\end{theorem}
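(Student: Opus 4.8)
The plan is to verify the asserted solution directly: insert $\Delta(x)=\sum_{n\ge1}\mu(n)n^{-1}e^{-x/n}$ from (2.9) into the right-hand side of (2.7), evaluate the integral, and check that what remains equals $f(x)$ as in (2.8). First I would compute the integral term by term: since $\int_0^\infty e^{-bt}\sin(xt)\,dt=x/(x^2+b^2)$, applying this with $b=2\pi/n$ yields
$$\int_0^\infty \sin(xt)\,\Delta(2\pi t)\,dt=\sum_{n\ge1}\frac{\mu(n)}{n}\cdot\frac{xn^2}{x^2n^2+4\pi^2}=\sum_{n\ge1}\mu(n)\,\frac{xn}{x^2n^2+4\pi^2},$$
so the theorem reduces to the identity $\Delta(x)-\sum_{n\ge1}\mu(n)\,\dfrac{xn}{x^2n^2+4\pi^2}=f(x)$.

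To establish this identity I would pass to Mellin transforms and invoke the functional equation of $\zeta$. Using $M(e^{-y})(s)=\Gamma(s)$ and the analytic continuation of the Dirichlet series $\sum_{n\ge1}\mu(n)n^{s-1}=1/\zeta(1-s)$, one has $\Delta(x)=\frac{1}{2\pi i}\int_{(c_1)}\frac{\Gamma(s)}{\zeta(1-s)}x^{-s}\,ds$; using $M\!\big(\tfrac{y}{1+y^2}\big)(s)=\frac{\pi}{2\cos(\pi s/2)}$ together with the scaling $\frac{xn}{x^2n^2+4\pi^2}=\frac{1}{2\pi}\cdot\frac{y}{1+y^2}$, $y=xn/(2\pi)$, one gets $\sum_{n\ge1}\mu(n)\frac{xn}{x^2n^2+4\pi^2}=\frac{1}{2\pi i}\int_{(c_2)}\frac{(2\pi)^{s}}{4\cos(\pi s/2)\,\zeta(s)}x^{-s}\,ds$ on a line $c_2$ further to the right. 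The functional equation in the form $\zeta(1-s)=2(2\pi)^{-s}\cos(\pi s/2)\Gamma(s)\zeta(s)$ converts the second integrand into a multiple of $\Gamma(s)/\zeta(1-s)$, so the two contour integrals carry the same integrand (up to a constant) but on different vertical lines; their difference is the sum of the residues crossed while shifting $c_2$ back to $c_1$. Under the simple-zeros hypothesis these lie at $s=1-\rho$ with $\operatorname{Res}_{s=1-\rho}\frac{1}{\zeta(1-s)}=-1/\zeta'(\rho)$, so the residue sum is built from $\Gamma(1-\rho)/\zeta'(\rho)$ times powers $x^{\rho-1}$. I would then rewrite these via the reflection formula together with $\zeta'(1-\rho)=-\zeta'(\rho)/\chi(\rho)$ (where $\zeta(s)=\chi(s)\zeta(1-s)$ and $\Gamma(s)\chi(s)=\frac{(2\pi)^{s}}{2\cos(\pi s/2)}$) to recognise the outcome as $f(x)=\frac{\pi}{2}\sum_{\rho}\frac{x^{-\rho}}{\cos(\pi\rho/2)\zeta'(\rho)}$; note that $f(x)=\sum_{\rho}\operatorname{Res}_{s=\rho}\!\Big[\frac{\pi}{2\cos(\pi s/2)}\cdot\frac{x^{-s}}{\zeta(s)}\Big]$, which is precisely the bridge between the two contour representations.

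The main obstacle is the analysis rather than the algebra. The Möbius series $\sum_{n}\mu(n)n^{-1}e^{-x/n}$ and the sum $\sum_{n}\mu(n)\frac{xn}{x^2n^2+4\pi^2}$ converge only conditionally, so the term-by-term evaluation of the integral in (2.7) and every interchange of $\sum_n$ with an integral must be justified by partial summation together with the decay of $\Delta$, in the same spirit as the prime sums treated in Section 2; likewise the sum over zeros in (2.8) is only conditionally convergent and has to be read as a symmetric limit over $|\Im\rho|\le T$, with the horizontal segments of the shifted contour shown to be negligible as $T\to\infty$ — the familiar bookkeeping of an explicit-formula argument. The simplicity of the zeros enters here too: it is what makes the poles at $s=1-\rho$ simple, so that the residues take the closed form occurring in $f$; without it one would also collect derivatives of $x^{-s}$ and of $1/\zeta(s)$, and the formula for $f$ would need modification. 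I expect making these convergence steps rigorous, not the residue computation, to be where the real work lies.
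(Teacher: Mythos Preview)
The paper does not prove this statement. Theorem~2.2 is imported verbatim from the author's earlier work [7, Theorem~2.1] and is stated here only as an input to the derivation of Theorem~2.3; no argument for it is given in the present paper. There is therefore nothing to compare your attempt against.

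That said, one concrete issue in your sketch deserves attention. After applying the functional equation $\zeta(1-s)=2(2\pi)^{-s}\cos(\pi s/2)\,\Gamma(s)\zeta(s)$, the Mellin integrand for $\Delta$ becomes
\[
\frac{\Gamma(s)}{\zeta(1-s)}=\frac{(2\pi)^{s}}{2\cos(\pi s/2)\,\zeta(s)},
\]
which is exactly \emph{twice} your integrand $(2\pi)^{s}/\bigl(4\cos(\pi s/2)\,\zeta(s)\bigr)$ for the sine-integral term. So the two contour integrals do not carry the same integrand on different lines; they differ by a genuine factor of $\tfrac12$. Consequently the assertion that ``their difference is the sum of the residues crossed while shifting $c_2$ back to $c_1$'' is not correct as written: even after any contour shift a term $\tfrac12\Delta(x)$ remains, and you must explain how this is absorbed (for instance via the self-reciprocity of $\Delta$ under $g\mapsto\int_0^\infty\sin(xt)\,g(2\pi t)\,dt$, or by performing two separate contour moves and matching the residue contributions from $1/\zeta(s)$ at $s=\rho$ with those from $1/\zeta(1-s)$ at $s=1-\rho$). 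Your identification of the convergence and contour-shift bookkeeping as the hard part is right, but this algebraic point has to be settled before the residue computation can even begin.
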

From [3, pg.73], we know that for $\Re(\alpha)>0,$
\begin{equation}\int_{0}^{\infty}e^{-\alpha x^2}\sin(yx)dx=\frac{y}{2\alpha}{}_1F_{1}(1;3/2;-\frac{y^2}{4\alpha}), \end{equation}
where ${}_1F_{1}$ is the confluent hypergeometric function [6, pg.107]
$${}_1F_{1}(a;b;x):=\sum_{n\ge0}\frac{(a)_n}{(b)_n}\frac{x^n}{n!},$$ 
and $(a)_n=a(a+1)\cdots (a+n-1).$
Hence, if we put $a=2\pi$ in (2.4), and take the sine transform, we find by [7, Theorem 2.1] and (2.10) the following using Fubini's theorem.
\begin{theorem} Let $x>1,$ and assume that the zeros of $\zeta(s)$ are simple. Then,
$$\sum_{n\ge1}\frac{\mu(n)}{n}e^{-x/n}-\frac{\pi}{2}\sum_{\rho}\frac{x^{-\rho}}{\cos(\pi  \rho/2) \zeta'(\rho)}$$
$$=2\pi\int_{0}^{\infty}\sum_{n\ge1}\frac{\mu(n)}{n^2}e^{-2\pi y/n^2}\left(\sqrt{\pi}xy^{1/2}{}_1F_{1}(1;3/2;-yx^2) \right)dy.$$
\end{theorem}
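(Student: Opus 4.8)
The plan is to recognize the left-hand side as the sine transform appearing in Theorem~2.2. Write $\Delta(u)=\sum_{n\ge1}\frac{\mu(n)}{n}e^{-u/n}$ and $f(u)=\frac{\pi}{2}\sum_{\rho}\frac{u^{-\rho}}{\cos(\pi\rho/2)\zeta'(\rho)}$. Since Theorem~2.2 asserts that $\Delta$ solves the integral equation (2.7), for $x>1$ and under the hypothesis that the zeros of $\zeta$ are simple we obtain
\[ \sum_{n\ge1}\frac{\mu(n)}{n}e^{-x/n}-\frac{\pi}{2}\sum_{\rho}\frac{x^{-\rho}}{\cos(\pi\rho/2)\zeta'(\rho)}=\Delta(x)-f(x)=\int_{0}^{\infty}\sin(xt)\,\Delta(2\pi t)\,dt . \]
So the task reduces to evaluating $\int_{0}^{\infty}\sin(xt)\,\Delta(2\pi t)\,dt$ in closed form and matching it with the right-hand side of the statement.

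For the evaluation I would not expand $\Delta(2\pi t)$ as a series but substitute the double-integral representation coming from (2.4): specializing that identity to $a=2\pi$ writes $\Delta(2\pi t)=\sum_{n\ge1}\frac{\mu(n)}{n}e^{-2\pi t/n}$ as an explicit constant times $\int_{0}^{\infty}\bigl(\sum_{n\ge1}\frac{\mu(n)}{n^{2}}e^{-4\pi^{2}y/n^{2}}\bigr)y^{-1/2}e^{-t^{2}/(4y)}\,dy$. Inserting this and interchanging the $t$-integration with the $y$-integration and the inner sum over $n$ by Fubini, the only $t$-dependent factor becomes $\int_{0}^{\infty}\sin(xt)\,e^{-t^{2}/(4y)}\,dt$, which by (2.10) with $\alpha=1/(4y)$ equals $2xy\,{}_1F_{1}(1;3/2;-yx^{2})$. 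Absorbing $y$ against $y^{-1/2}$ and collecting the constants $\sqrt{\pi}$, $x$, and $2\pi$ (together with the harmless rescaling $y\mapsto y/(2\pi)$, which puts the Riesz-type kernel in the displayed form $\sum_{n\ge1}\frac{\mu(n)}{n^{2}}e^{-2\pi y/n^{2}}$) then produces the asserted identity.

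The step requiring genuine care is the interchange of integrals and summation. The transform $\int_{0}^{\infty}\sin(xt)\,\Delta(2\pi t)\,dt$ converges only conditionally and $\sum_{n}|\mu(n)|/n=\infty$, so Fubini cannot be applied term by term to the series $\sum_{n}\frac{\mu(n)}{n}e^{-2\pi t/n}$; this is exactly why one works instead with the double integral from (2.4). There the inner series $R(y):=\sum_{n\ge1}\frac{\mu(n)}{n^{2}}e^{-4\pi^{2}y/n^{2}}$ is a Riesz-type function --- absolutely and locally uniformly convergent, bounded on $(0,\infty)$, and tending to $0$ as $y\to\infty$ --- so, paired with the Gaussian kernel $y^{-1/2}e^{-t^{2}/(4y)}$, the integrand on $(t,y)\in(0,\infty)^{2}$ is absolutely integrable for each fixed $x>1$, which legitimizes Fubini; the convergence of the resulting $y$-integral against $y^{1/2}\,{}_1F_{1}(1;3/2;-yx^{2})$ then follows from the decay of $R(y)$ (available unconditionally via the prime number theorem, with room to spare under the Riemann Hypothesis). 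The remaining ingredients --- the convergence of the zero-sum $\sum_{\rho}x^{-\rho}/(\cos(\pi\rho/2)\zeta'(\rho))$ and the validity of the Fredholm identity itself --- are supplied by [7, Theorem~2.1] under the simple-zeros assumption, so nothing further is needed there.
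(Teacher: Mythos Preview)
Your approach is exactly the paper's: it too sets $a=2\pi$ in (2.4), applies the sine transform together with (2.10), invokes Theorem~2.2 (i.e.\ [7, Theorem~2.1]) to identify the left-hand side, and cites Fubini for the interchange. Your discussion of why Fubini is legitimate is in fact more detailed than what the paper supplies.
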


We offer a formula for a more general sum than the one noted in the introduction. As usual, $\erf(x)$ is the error function.
\begin{theorem} For $X>1,$ $r\ge1,$
$$\frac{1}{2}\sum_{n\ge1}\frac{\mu(n)}{rn+1}(X^{-(r+1/n)}-1))$$
$$=\int_{0}^{\infty}\sum_{n\ge1}\frac{\mu(n)}{n^2}e^{-y/n^2}\left(e^{yr^2}\left(\erf(2y(r+\log(X)/(2y)))-\erf(r2y) \right) \right)dy.$$
\end{theorem}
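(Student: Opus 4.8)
The plan is to derive this identity by specializing the integral relation of the Corollary above — equivalently, by multiplying the Riesz–function identity (2.4) through by a weight $f$ and integrating — to the choice $f(x)=e^{-rx}$, and then carrying out the single resulting integral in $x$ by completing the square. Concretely, the first step is to take (2.4) with $a=1$,
$$\frac{\pi}{2}\sum_{n\ge1}\frac{\mu(n)}{n}e^{-x/n}=\int_{0}^{\infty}\sum_{n\ge1}\frac{\mu(n)}{n^2}e^{-y/n^2}\left(\frac{\sqrt{\pi}}{2}\,y^{-1/2}e^{-x^2/(4y)}\right)dy,$$
multiply both sides by $e^{-rx}$, and integrate over $x\in[0,\log X]$.

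On the left, since $\int_{0}^{\log X}e^{-(r+1/n)x}\,dx=\frac{1-X^{-(r+1/n)}}{r+1/n}$ and $\frac{1}{n(r+1/n)}=\frac{1}{rn+1}$, interchanging the (absolutely convergent) sum over $n$ with the $x$-integral reproduces exactly the left side of the theorem, $\tfrac12\sum_{n\ge1}\frac{\mu(n)}{rn+1}\bigl(X^{-(r+1/n)}-1\bigr)$, up to elementary bookkeeping of the constants and orientation, which is settled at the end. On the right, after swapping the order of the $x$- and $y$-integrations and the sum over $n$, the only non-routine object is the inner integral $\int_{0}^{\log X}e^{-rx}e^{-x^2/(4y)}\,dx$. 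Completing the square, $-rx-\tfrac{x^2}{4y}=yr^2-\tfrac{(x+2yr)^2}{4y}$, and substituting $t=(x+2yr)/(2\sqrt{y})$ turns it into
$$e^{yr^2}\sqrt{\pi y}\left(\erf\!\Bigl(r\sqrt{y}+\tfrac{\log X}{2\sqrt{y}}\Bigr)-\erf\bigl(r\sqrt{y}\bigr)\right),$$
where $\erf(z)=\tfrac{2}{\sqrt{\pi}}\int_0^z e^{-t^2}\,dt$. The factor $\sqrt{\pi y}$ absorbs the $y^{-1/2}$ from the kernel, and after collecting the remaining constants one obtains the asserted formula, with the two error-function arguments written in the form $\sqrt{y}\,(r+\tfrac{\log X}{2y})$ and $r\sqrt{y}$.

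The step I expect to be the main obstacle is the justification of the interchanges of integration and summation (Fubini–Tonelli), because the kernel factor $e^{yr^2}$ grows in $y$. The point to exploit is that the difference of error functions decays like $e^{-r^2 y}$ as $y\to\infty$: the two arguments both tend to $+\infty$ while differing only by $\tfrac{\log X}{2\sqrt{y}}\to0$, so $\erf(a+\varepsilon)-\erf(a)\sim \tfrac{2}{\sqrt{\pi}}\varepsilon e^{-a^2}$ with $a^2=r^2y$; hence the resulting $y$-integrand is $O\!\bigl(y^{-1/2}e^{-y/n^2}\bigr)$ at infinity and bounded near $y=0$, and together with $\sum_n|\mu(n)|/n^2<\infty$ this gives the absolute convergence needed both for the interchanges and for convergence of all the integrals in the statement. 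Since this argument uses only (2.4) (which admits the unconditional, table-based first proof given above) and elementary estimates, the identity holds unconditionally.
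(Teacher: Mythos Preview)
Your approach is essentially identical to the paper's: both specialize Corollary~2.1.1 (equivalently, integrate (2.4) with $a=1$ against $f(x)=e^{-rx}$ over $[0,\log X]$), compute the inner $x$-integral by completing the square to produce the difference of error functions, and invoke Fubini. Your write-up is more careful in two respects --- you actually justify the Fubini step via the $e^{-r^2 y}$ decay of the $\erf$ difference, and you record the correct error-function arguments $r\sqrt{y}+\tfrac{\log X}{2\sqrt{y}}$ and $r\sqrt{y}$ (the printed ``$2y$'' in place of $\sqrt{y}$, and the missing overall factor of $\pi$, are typos in the stated theorem).
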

\begin{proof}
Note that (by [3, pg.146, eq.(21)] with [3, pg.146, eq.(24), $v=1$]),
\begin{equation}\int_{0}^{\log(X)}e^{-rx-ax^2}dx=\frac{\sqrt{\pi}e^{r^2/(4a)}\left(\erf(\frac{r+2a\log(X)}{2\sqrt{a}})-\erf(\frac{r}{2\sqrt{a}}) \right)}{2\sqrt{a}}. \end{equation}
In Corollary 2.1.1 we put $f(x)=e^{-rx},$ $r\ge1,$ using (2.11) to find that 
$$\frac{1}{2}\sum_{n\ge1}\frac{\mu(n)}{rn+1}(X^{-(r+1/n)}-1)$$
$$\begin{aligned}=& \int_{0}^{\log(X)}e^{-rx}\int_{0}^{\infty}\sum_{n\ge1}\frac{\mu(n)}{n^2}e^{-y/n^2}\left(\frac{\sqrt{\pi}}{2}y^{-1/2}e^{-x^2/(4y)} \right)dydx \\
=&\int_{0}^{\infty}\sum_{n\ge1}\frac{\mu(n)}{n^2}e^{-y/n^2}\left(\frac{\pi}{2}e^{yr^2}\left(\erf(2y(r+\log(X)/(2y)))-\erf(r2y) \right) \right)dy.\end{aligned} $$
The interchange of integrals is justified by Fubini's theorem.\end{proof}
\end{section}

1390 Bumps River Rd. \\*
Centerville, MA
02632 \\*
USA \\*
E-mail: alexpatk@hotmail.com, alexepatkowski@gmail.com
\end{document}